\newcommand{\tpmod}[1]{{\@displayfalse\pmod{#1}}}
\newtheorem{thm}{Theorem}[section]
\newtheorem{lemma}[thm]{Lemma}
\theoremstyle{remark}
\theoremstyle{definition}
\theoremstyle{THM}
\newcommand{\abs}[1]{\left|{#1}\right|}
\def\FF {{\mathcal F}}
\def\Z {{\mathbb Z}}
\def\Q {{\mathbb Q}}
\def\F {{\mathbb F}}
\def\Z {{\mathbb Z}}
\def\Q {{\mathbb Q}}
\def\Gal{{\mbox{{\rm{Gal}}}}}
\def\red#1 {\textcolor{red}{#1 }}
\def\blue#1 {\textcolor{blue}{#1 }}
\numberwithin{equation}{section}
\def\Z {{\mathbb Z}}
\newcommand{\Mod}[1]{\ (\mathrm{mod}\enspace #1)}
\newcommand{\mmod}[1]{\ \mathrm{mod}\enspace #1}
\begin{document}

\title[Monogenic Reciprocal Quartic Polynomials]{Monogenic Reciprocal Quartic Polynomials And Their Galois Groups}


\author{Lenny Jones}
\address{Professor Emeritus, Department of Mathematics, Shippensburg University, Shippensburg, Pennsylvania 17257, USA}
\email[Lenny~Jones]{doctorlennyjones@gmail.com}

\date{\today}

\begin{abstract}
Suppose that $f(x)=x^4+Ax^3+Bx^2+Ax+1\in \Z[x]$. We say that $f(x)$ is {\em monogenic} if $f(x)$ is irreducible over $\Q$ and $\{1,\theta,\theta^2,\theta^3\}$ is a basis for the ring of integers of $\Q(\theta)$, where $f(\theta)=0$.
For each possible Galois group $G$ that can occur in the two cases of $A\ne 0$ with $B=0$, and $AB\ne 0$, we determine all
  monogenic polynomials $f(x)$ with Galois group $G$.
\end{abstract}

\subjclass[2020]{Primary 11R16, 11R04, 11R32, 11R09}
\keywords{monogenic, reciprocal, quartic, Galois}

\maketitle
\section{Introduction}\label{Section:Intro}
Our focus in this article is on the monogenicity and Galois groups over $\Q$ of reciprocal quartic polynomials
\begin{equation}\label{Eq:f}
f(x):=x^4+Ax^3+Bx^2+Ax+1\in \Z[x],
\end{equation} in two particular cases: $A\ne 0$ with $B=0$, and $AB\ne 0$. 
We remind the reader that $f(x)$ is {\em monogenic} if $f(x)$ is irreducible over $\Q$ and $\{1,\theta,\theta^2,\theta^3\}$ is a basis for the ring of integers of the quartic field $\Q(\theta)$, where $f(\theta)=0$. We say that two irreducible quartic polynomials are {\em equivalent} if they generate the same quartic field. Otherwise, we say they are {\em distinct}.

Our approach involves three stages. First, we derive necessary and sufficient conditions on $A$ and $B$ so that $f(x)$ is irreducible over $\Q$. Second, we use a Theorem of Dedekind (see Theorem \ref{Thm:Dedekind}) to impose restrictions on $A$ and $B$ so that $f(x)$ is monogenic. In the third and final stage, for irreducible $f(x)$ and each possible group $G$, we layer the additional restrictions on $A$ and $B$ such that the Galois group of $f(x)$ over $\Q$, denoted $\Gal(f)$, is $G$. Ultimately, this three-stage procedure allows us to determine all monogenic polynomials $f(x)$ with $\Gal(f)\simeq G$. Moreover, in the first case of $A\ne 0$ with $B=0$, we determine all distinct monogenic polynomials. In the second case of $AB\ne 0$, we show that the monogenic polynomials can be partitioned into five sets, four of which are infinite. Each of the four infinite sets contains an infinite subset of distinct polynomials, while the fifth set is a six-element set having a maximal subset of three distinct polynomials. Furthermore, $\Gal(f)\simeq D_4$ if $f(x)$ is an element of any of the infinite sets, and $\Gal(f)\simeq C_4$ for all polynomials $f(x)$ in the six-element set. 

Throughout this article, we let $f(x)$ be as defined in \eqref{Eq:f}. We let $C_n$ denote the cyclic group of order $n$, and $D_4$ denote the dihedral group of order 8. We also let
\[W_1:=B+2-2A,\quad W_2:=B+2+2A, \quad W_3:=A^2-4B+8\]
\[\mbox{and}\quad W:=W_1W_2W_3.\]

Our main theorem is:
\begin{thm}
\text{} 
 \begin{enumerate}
  \item \label{M:I1} Suppose that $A\not \in \{0,\pm 1\}$ and $B=0$. 
  \begin{enumerate}
  \item \label{M:I10}  Then $f(x)$ is monogenic if and only if $2\mid A$ and
  \[A-1, \quad A+1 \quad \mbox{and} \quad (A/2)^2+2\]
  are all squarefree;
    \item \label{M:I11} The set $\FF$ of the monogenic polynomials $f(x)$ is infinite;
    \item \label{M:I12} $\Gal(f)\simeq D_4$ for each $f(x)\in \FF$;
    \item \label{M:I13} The set $\FF^{+}:=\{f(x)\in \FF: A\ge 2\}$ is a complete set of distinct monogenic quartic polynomials of this form.
  \end{enumerate}
  \item \label{M:I2} Suppose that $AB\ne 0$ and $W_3$ is not a square.
    \begin{enumerate}[label=(\Roman*)]
  \item \label{M:I21} Then $f(x)$ is monogenic if and only if all of the following hold: 
  \begin{enumerate}[label=(\alph*)]
    \item  $W_1$ is squarefree;
    \item  $W_2$ is squarefree;
    \item  if $2\mid A$ and $2\mid B$, then $W_3/4$ is squarefree and
    \[(A\mmod{4}, \ B\mmod{4})\in\{(0,0),(2,0)\};\]
    \item  if $2\mid A$ and $2\nmid B$, then $W_3/4$ is squarefree and
    \[(A\mmod{4}, \ B\mmod{4})\in\{(0,3),(2,1)\};\]
    \item  if $2\nmid A$ and $2\mid B$, then $W_3$ is squarefree and
    \[(A\mmod{4}, \ B\mmod{4})\in\{(1,2),(3,2)\};\]
    \item  if $2\nmid AB$, then $W_3$ is squarefree and
    \[(A\mmod{4}, \ B\mmod{4})\in\{(1,1),(1,3),(3,1),(3,3)\}.\]
    \end{enumerate}
    \item \label{M:I22}
     Suppose that $f(x)$ is monogenic, and let
    \begin{enumerate}[label=(\roman*)]
      \item[] $\FF_1:=\{f(x): 2\mid A \ \mbox{and}\ 2\mid B\}$;
      \item[] $\FF_2:=\{f(x): 2\mid A \ \mbox{and}\ 2\nmid B\}$;
      \item[] $\FF_3:=\{f(x): 2\nmid A \ \mbox{and}\ 2\mid B\}$;
      \item[] $\FF_4:=\{f(x): 2\nmid AB \ \mbox{and $W$ is not a square}\}$;
      \item[] $\FF_5:=\{f(x): 2\nmid AB \ \mbox{and $W$ is a square}\}$.
    \end{enumerate}
    Then the sets $\FF_i$ partition the set of all monogenic polynomials with $AB\ne 0$. Moreover, for each $i\in \{1,2,3,4\}$, $\Gal(f)\simeq D_4$ for every $f(x)\in \FF_i$, and each $\FF_i$ contains infinitely many distinct monogenic polynomials. The set $\FF_5$ contains precisely the six monogenic polynomials
    \begin{equation}\label{C4polys}
        \begin{array}{c}
     x^4+\varepsilon x^3+x^2+\varepsilon x+1, \quad x^4+\varepsilon 9x^3+19x^2+\varepsilon 9x+1,\\[.5em]
      x^4+\varepsilon 11x^3+31x^2+\varepsilon 11x+1,
     \end{array}\end{equation} where $\varepsilon \in \{\pm 1\}$, and $\Gal(f)\simeq C_4$ for each $f(x)\in \FF_5$. Furthermore, the three-element set of the polynomials with $\varepsilon=1$ in \eqref{C4polys} is a maximal subset of $\FF_5$ consisting of distinct polynomials.
\end{enumerate}
\end{enumerate}
\end{thm}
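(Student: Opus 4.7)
My approach follows the three-stage procedure outlined in the introduction. For Stage 1 (irreducibility), I would exploit the reciprocal substitution $y = x + 1/x$, which writes $f(x) = (x^2 - y_+ x + 1)(x^2 - y_- x + 1)$ over $\Q(\sqrt{W_3})$, where $y_\pm = (-A \pm \sqrt{W_3})/2$ are the roots of the resolvent quadratic $y^2 + Ay + (B-2)$. By Gauss's lemma, any nontrivial factorization of $f$ in $\Z[x]$ into monic reciprocal quadratics must take the form $(x^2 + ax + 1)(x^2 + bx + 1)$, which exists iff $W_3$ is a square. Combined with $f(-1) = W_1 \ne 0$ and $f(1) = W_2 \ne 0$ (guaranteed by the squarefree conditions), the hypothesis that $W_3$ is not a square gives irreducibility of $f$. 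A short resultant calculation yields the discriminant formula $\mathrm{disc}(f) = W_1 W_2 W_3^2$.

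For Stage 2 (monogenicity), I would apply Dedekind's criterion (Theorem \ref{Thm:Dedekind}) prime-by-prime to $\mathrm{disc}(f)$. Odd primes dividing $W_1 W_2$ but not $W_3$ are handled by squarefreeness of $W_1$ and $W_2$; odd primes dividing $W_3$ by squarefreeness of $W_3$ (or of $W_3/4$ when $2 \mid W_3$). The prime $p = 2$ is the most delicate, since its behavior depends on $(A, B) \pmod 4$: explicit factorization of $\overline{f}(x) \pmod{2}$ followed by the Dedekind test in each parity case produces exactly the congruence restrictions (c)--(f) of (I). For Stage 3 (Galois group), the resolvent cubic of $f$ factors as $(x-2)(x^2 + (2-B)x + (A^2 - 2B))$, whose quadratic factor has discriminant $W_1 W_2$. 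Standard quartic Galois analysis then yields $\Gal(f) \simeq C_2 \times C_2$ iff $W_1 W_2$ is a rational square (impossible, since $W_1, W_2$ are distinct squarefree integers when $A \ne 0$); $\Gal(f) \simeq C_4$ iff $W_1 W_2$ is not a square but $W = W_1 W_2 W_3$ is (equivalently, $\Q(\sqrt{W_3}) = \Q(\sqrt{W_1 W_2})$); and $\Gal(f) \simeq D_4$ otherwise. A short $2$-adic valuation check on the residue classes defining $\FF_1, \FF_2, \FF_3$ shows that $v_2(W)$ is odd in each of those families, ruling out the $C_4$ case and forcing $\Gal(f) \simeq D_4$; the same conclusion holds in $\FF_4$ by definition. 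Infinitude and the existence of infinitely many distinct fields within each of $\FF_1, \ldots, \FF_4$ follow from explicit one-parameter subfamilies satisfying the congruence constraints, combined with standard squarefree-density results and the observation that $f(x)$ and $f(-x)$ generate the same field.

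The main obstacle is proving $|\FF_5| = 6$. Here $A, B$ are both odd, $W_1, W_2, W_3$ are all squarefree, and $W_1 W_2 W_3$ is a perfect square. Squarefreeness forces every prime to divide an even number of the $W_i$, yielding a decomposition $W_1 = \varepsilon_1 ab$, $W_2 = \varepsilon_2 ac$, $W_3 = \varepsilon_3 bc$ with $a, b, c$ pairwise coprime positive squarefree integers and $\varepsilon_1 \varepsilon_2 \varepsilon_3 = 1$. Substituting into $W_2 - W_1 = 4A$, $W_1 + W_2 = 2B + 4$, and $W_3 = A^2 - 4B + 8$ and eliminating a variable produces, in each combinatorial subcase, an elliptic-type Diophantine equation whose integer points are finite by Siegel's theorem and enumerable by direct analysis. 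A careful case-by-case enumeration, respecting the congruence conditions for $\FF_5$ monogenicity, reduces to the three compatible triples $(a, b, c) = (1, 1, 5)$, $(3, 1, 13)$, $(11, 1, 5)$, producing the six polynomials of \eqref{C4polys} under the $A \leftrightarrow -A$ symmetry. Distinctness of the three $\varepsilon = 1$ representatives is then immediate from comparing their field discriminants $125$, $3^2 \cdot 13^3$, and $5^3 \cdot 11^2$.
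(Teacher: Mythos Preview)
Your overall architecture---irreducibility via the resolvent quadratic, Dedekind's criterion prime-by-prime, and the $W_1W_2$ / $W$ square tests for the Galois group---matches the paper's. The gap is in your claim that ``a short $2$-adic valuation check on the residue classes defining $\FF_1,\FF_2,\FF_3$ shows that $v_2(W)$ is odd in each of those families.'' This is simply false. For instance, in $\FF_1$ with $(A,B)\equiv(2,0)\pmod 4$, write $A=4m+2$, $B=4n$; then $W_1,W_2\equiv 2\pmod 4$ while $W_3=4(4m^2+4m-4n+3)$ has odd part $4m^2+4m-4n+3$, so $v_2(W)=1+1+2=4$. Similarly one finds $v_2(W)=2$ in the subcase $(0,3)$ of $\FF_2$ and in both subcases $(1,2),(3,2)$ of $\FF_3$. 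Thus $2$-adics alone do not rule out $W$ being a square in four of the six residue classes, and your argument for $\Gal(f)\simeq D_4$ on $\FF_1,\FF_2,\FF_3$ collapses there.

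The paper fills this gap by exactly the kind of Diophantine analysis you reserve for $\FF_5$: writing $|w_1|=PQ$, $|w_2|=QR$, $w_3=PR$ with $P,Q,R$ pairwise coprime squarefree, eliminating to obtain explicit quartic relations, and then combining integral-point searches on curves such as $y^2=k^4-8k^3+22k^2-56k+9$ (via Magma) with divisibility/inequality arguments to reach a contradiction in each sign case. So the $D_4$ conclusion for $\FF_1,\FF_2,\FF_3$ is not a parity triviality; it requires essentially the same machinery as the $\FF_5$ enumeration. Relatedly, your appeal to ``Siegel's theorem'' for $\FF_5$ gives finiteness but not an explicit list; the enumeration you need (and that the paper imports from \cite{JonesOcticsNJM}) comes from the same concrete $P,Q,R$ analysis, not from Siegel.
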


\section{Preliminaries}
We begin with some information surrounding the monogenicity of $f(x)$. 
Suppose that $f(x)$ is irreducible over $\Q$, and
let $K=\Q(\theta)$ with ring of integers $\Z_K$, where $f(\theta)=0$. Then, we have \cite{Cohen}
\begin{equation} \label{Eq:Dis-Dis}
\Delta(f)=\left[\Z_K:\Z[\theta]\right]^2\Delta(K),
\end{equation}
where $\Delta(f)$ and $\Delta(K)$ denote the discriminants over $\Q$, respectively, of $f(x)$ and the number field $K$.
Thus, we have the following theorem from \eqref{Eq:Dis-Dis}.
\begin{thm}\label{Thm:mono}
Suppose that $f(x)$ is irreducible over $\Q$. Then $f(x)$ is monogenic if and only if
  $\Delta(f)=\Delta(K)$, or equivalently, $\Z_K=\Z[\theta]$.
\end{thm}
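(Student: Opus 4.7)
The plan is to deduce Theorem \ref{Thm:mono} immediately from the definition of monogenicity together with the discriminant relation \eqref{Eq:Dis-Dis}. By hypothesis $f(x)$ is irreducible over $\Q$, so $K=\Q(\theta)$ is indeed a number field of degree $4$ and the set $\{1,\theta,\theta^2,\theta^3\}$ is a $\Q$-basis of $K$ contained in $\Z_K$. The definition of monogenic rephrases directly as the equality $\Z[\theta]=\Z_K$, i.e.\ as the statement that the index $[\Z_K:\Z[\theta]]$ equals $1$.

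The only nontrivial ingredient is the formula
\[
\Delta(f)=\bigl[\Z_K:\Z[\theta]\bigr]^2\,\Delta(K),
\]
which is \eqref{Eq:Dis-Dis} above, quoted from \cite{Cohen}. Since $\Delta(K)\ne 0$ and the index is a positive integer, the square factor $[\Z_K:\Z[\theta]]^2$ equals $1$ if and only if $\Delta(f)=\Delta(K)$. Combining these two observations gives the chain of equivalences
\[
f(x)\text{ is monogenic}\iff \Z[\theta]=\Z_K \iff [\Z_K:\Z[\theta]]=1 \iff \Delta(f)=\Delta(K),
\]
which is exactly the conclusion of the theorem.

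Because the statement is essentially a one-line consequence of a cited identity, there is no real obstacle; the argument is just a matter of unpacking the definition and reading off \eqref{Eq:Dis-Dis}. The only point worth flagging is that one must invoke the irreducibility hypothesis to ensure that $\Z[\theta]$ is a full-rank sublattice of $\Z_K$, so that the index is finite and the discriminant formula is applicable in the stated form.
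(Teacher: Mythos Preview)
Your proposal is correct and follows exactly the paper's own approach: the theorem is stated as an immediate consequence of the discriminant relation \eqref{Eq:Dis-Dis}, and you have simply spelled out the chain of equivalences that the paper leaves implicit.
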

\noindent
 An easy calculation in Maple shows that
\begin{equation}\label{Eq:Delf}
  \Delta(f)=W_1W_2W_3^2. 
\end{equation}

The next theorem deals with the irreducibility of $f(x)$. 
\begin{thm}\label{Thm:Main1}\text{} 
  \begin{enumerate}
    \item \label{M1:I1} If $A=0$ and $B=0$, then $f(x)=x^4+1$ is irreducible.
    \item \label{M1:I2} If $A=0$ and $B\ne 0$, then $f(x)$ is reducible if and only if
    \[\mbox{at least one of} \quad -B-2, \ -B+2 \quad \mbox{and} \quad B^2-4 \quad \mbox{is a square.}\]
    \item \label{M1:I3} If $A\ne 0$ and $B=0$, then $f(x)$ is reducible if and only if $A=\pm 1$.
    \item \label{M1:I4} If $AB\ne 0$, then $f(x)$ is reducible if and only if $W_3$ is a square.
  \end{enumerate}
\end{thm}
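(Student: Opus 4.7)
The plan is to exploit the reciprocity of $f$ via the substitution $y = x + 1/x$. First, I would observe that
\[
f(x)/x^2 = (x + 1/x)^2 + A(x + 1/x) + (B-2),
\]
so setting $g(y) := y^2 + Ay + (B-2)$ (whose discriminant is precisely $W_3 = A^2 - 4B + 8$), one obtains the factorization
\[
f(x) = (x^2 - y_1 x + 1)(x^2 - y_2 x + 1)
\]
over $\overline{\Q}$, where $y_1, y_2$ are the roots of $g$. Part \eqref{M1:I1} then follows immediately, since $f(x) = x^4 + 1 = \Phi_8(x)$ is the $8$th cyclotomic polynomial.

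For Parts \eqref{M1:I2}--\eqref{M1:I4}, my core task is to enumerate every way $f$ can factor over $\Q$. By Gauss's lemma, any $\Q$-factorization into monic factors descends to $\Z[x]$. A rational root $r$ of $f$ is nonzero and pairs with $1/r$ by reciprocity, so $y_0 := r + 1/r$ is an integer root of $g$. For a factorization $f = pq$ into two monic quadratic factors in $\Z[x]$, I would use the reciprocity of $f$ to force the set $\{p,q\}$ to be stable under the involution $p(x) \mapsto p^*(x) := x^2 p(1/x)/p(0)$. Two cases then arise: (a) both $p$ and $q$ are reciprocal, so $p(x) = x^2 + ax + 1$ and $q(x) = x^2 + cx + 1$, giving $a+c = A$ and $ac + 2 = B$, making $-a,-c$ the rational roots of $g$ and forcing $W_3$ to be a square; or (b) $p$ and $q$ are swapped by the involution, so $p(x) = x^2 + ax + b$ and $q(x) = x^2 + (a/b)x + 1/b$, and integrality forces $b = \pm 1$. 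The subcase $b=1$ collapses to (a), while $b = -1$ yields $p(x) = x^2 + ax - 1$, $q(x) = x^2 - ax - 1$, giving $A = 0$ and $B = -a^2 - 2$.

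Assembling: for Part \eqref{M1:I4} ($AB \neq 0$), the $b = -1$ subcase is excluded by $A \neq 0$, so reducibility forces $W_3$ to be a square; conversely, when $W_3$ is a square, $g$ has integer roots, giving an explicit factorization. For Part \eqref{M1:I3} ($B = 0$, $A \neq 0$), the $b=-1$ subcase requires $B = -a^2 - 2 \neq 0$ and so cannot occur, leaving only case (a); hence reducibility is equivalent to $W_3 = A^2 + 8$ being a square, which by solving $(m-A)(m+A) = 8$ in integers happens iff $A = \pm 1$, and each of these values gives a visible factorization. For Part \eqref{M1:I2} ($A = 0$, $B \neq 0$), case (a) corresponds to $-B+2$ being a square, case (b) with $b = -1$ corresponds to $-B-2$ being a square, and the factorization $f = (x^2+\alpha)(x^2+\beta)$ with $\alpha\beta = 1$ and $\alpha+\beta = B$ corresponds to $B^2 - 4$ being a square; the converses are routine direct verifications.

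The hard part will be ensuring the case analysis of two-quadratic factorizations is exhaustive and that Gauss's lemma is applied carefully to rule out spurious non-integral factorizations. No individual step is deep, but the bookkeeping across Parts \eqref{M1:I2}--\eqref{M1:I4} --- particularly tracking which square condition arises from which factorization type --- is the primary source of potential error.
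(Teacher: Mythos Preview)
Your proposal is correct and proceeds along the same underlying idea as the paper---reduce to $\Z[x]$ via Gauss's lemma, dispose of linear factors via $\pm 1$, and observe that any monic quadratic factorization must have both constant terms equal to $+1$ or both equal to $-1$---but you package it more systematically. The paper simply asserts the two quadratic templates $(x^2+ax+1)(x^2+bx+1)$ and $(x^2+ax-1)(x^2+bx-1)$ and checks each by hand in items~\eqref{M1:I3} and~\eqref{M1:I4}; your involution $p\mapsto p^*$ explains \emph{why} these are the only possibilities and ties the first template directly to the resolvent quadratic $g(y)=y^2+Ay+(B-2)$ with discriminant $W_3$. The paper also outsources item~\eqref{M1:I2} entirely to \cite{KW}, whereas your case split (a)/(b) together with the even-biquadratic factorization $(x^2+\alpha)(x^2+\beta)$ yields that item directly. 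One small point worth making explicit in your write-up: the condition ``$B^2-4$ is a square'' is in fact redundant (it forces $B=\pm 2$, already covered by the other two conditions), so your cases (a) and (b) alone are exhaustive for the forward direction, and the third factorization is only needed to match the statement's converse verbatim.
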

\begin{proof}
Item \eqref{M1:I1} is obvious.
Item \eqref{M1:I2} follows from \cite[Theorem 2]{KW}.

For item \eqref{M1:I3}, we have $f(x)=x^4+Ax^3+Ax+1$, where $A\ne 0$. Suppose first that $f(x)$ is reducible. If $f(x)$ has a linear factor, then $f(1)=2(A+1)=0$ or $f(-1)=-2(A-1)=0$, by the Rational Root theorem. Hence, $A=\pm 1$. If $f(x)$ has a quadratic factor, then either
\begin{equation}\label{Eq:Quadfac1}
f(x)=(x^2+ax+1)(x^2+bx+1)=x^4+(a+b)x^3+(ab+2)x^2+(a+b)x+1
\end{equation}
or
\begin{equation}\label{Eq:Quadfac2}
f(x)=(x^2+ax-1)(x^2+bx-1)=x^4+(a+b)x^3+(ab-2)x^2-(a+b)x+1,
\end{equation}
for some $a,b\in \Z$. Since \eqref{Eq:Quadfac2}
implies that $A=0$, we must have \eqref{Eq:Quadfac1}.
 Hence, $ab+2=0$, so that $(a,b)\in \{(-2,1),(-1,2),(1,-2),(2,-1)\}$, which implies that $A=\pm 1$, and the proof is complete in this direction. The converse follows easily by checking that when $A=\pm 1$, we get a nontrivial factorization of $f(x)$.

Finally, for item \eqref{M1:I4}, suppose first that $f(x)$ is reducible. If $f(x)$ has a linear factor, then we get from the Rational Root theorem that
\[\mbox{either} \quad f(1)=W_2=0 \quad \mbox{or} \quad f(-1)=W_1=0.\] If $f(x)$ has a quadratic factor, then we must have \eqref{Eq:Quadfac1}, since \eqref{Eq:Quadfac2} implies that $A=0$. Thus, equating coefficients, we get the system of equations
\[A=a+b \quad \mbox{and} \quad  B=ab+2.\] Then, $b=A-a$, so that $B=a(A-a)+2$, which gives the quadratic equation
\[a^2-Aa+B-2=0\]
in $a$. Solving this equation for $a$ yields
\begin{equation}\label{Eq:ab}
a=\frac{A\pm \sqrt{W_3}}{2} \quad \mbox{and}\quad b=\frac{A\mp\sqrt{W_3}}{2}.
\end{equation}  It follows that $W_3$ must be a square. Note then that, regardless of the parity of $A$, we have $a,b\in \Z$, which completes the proof in this direction. Conversely, an easy computation confirms that if $W_3$ is a square, then with $a$ and $b$ as given in \eqref{Eq:ab}, we get a nontrivial factorization of $f(x)$.
 \end{proof}

Although Theorem \ref{Thm:Main1} can be found spread out in the literature \cite{KW,Dickson}, the comprehensive organization and the elementary methods used here for the proof of items \eqref{M1:I3} and \eqref{M1:I4} yield a consolidated, more user-friendly version. Concerning the Galois groups and monogenicity of $f(x)$, the case in item \eqref{M1:I2} of Theorem \ref{Thm:Main1} has  been previously addressed in \cite{JonesBAMSEQT} and \cite{HJBAMS}, where it was shown, respectively, that there are no reciprocal monogenic trinomials with Galois group $C_4$, while there are infinitely many reciprocal monogenic trinomials with Galois group $C_2\times C_2$. The quintinomials of item \eqref{M1:I4} have been partially investigated in \cite{JonesBAMSQuints} and \cite{JonesFACM}. In \cite{JonesBAMSQuints}, the goal was to determine when quintinomials of degree $2^n$ are monogenic for all $n\ge 2$, without regard to their Galois groups. Although the special case of $A\equiv B\equiv 1\pmod{4}$ was omitted in \cite{JonesBAMSQuints}, it was later addressed in \cite{JonesFACM}, along with their Galois groups in certain cases.
As indicated in Section \ref{Section:Intro}, our focus here is on items \eqref{M1:I3} and \eqref{M1:I4} of Theorem \ref{Thm:Main1}. In particular, we wish to treat the situations not previously addressed in the literature.

The following lemma, which is due to Awtrey and Patane \cite[Corollary 4.2]{AP}, addresses the Galois group of $f(x)$. 
\begin{lemma} \label{Lem:AP} Let $f(x)$ be irreducible over $\Q$. 
 Then $Gal(f)$ is isomorphic to
\begin{enumerate}
  \item $C_4$ if and only if $W$ is a square;
      \item $C_2\times C_2$ if and only if $W_1W_2$ is a square;
      \item $D_4$ if and only if neither $W_1W_2$ nor $W$ is a square.
  \end{enumerate}
\end{lemma}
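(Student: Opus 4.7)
The plan is to analyze the splitting field $L$ of $f(x)$ via the ``reciprocal substitution'' $y=x+1/x$, which converts $f(x)/x^2$ into the resolvent quadratic $g(y):=y^2+Ay+(B-2)$ of discriminant $W_3$. Since $f$ is irreducible over $\Q$, so is $g$: a rational (hence integer) root $y_0$ of $g$ would yield a factor $x^2-y_0x+1$ of $f$ in $\Z[x]$. Therefore $W_3$ is not a square. Let $y_1,y_2$ be the roots of $g$, so $y_1+y_2=-A$, $y_1y_2=B-2$, and $y_1-y_2=\pm\sqrt{W_3}$. Writing $z_i:=\sqrt{y_i^2-4}$, the four roots of $f$ partition into reciprocal pairs $\{\theta,1/\theta\}$ and $\{\phi,1/\phi\}$ with $2\theta=y_1+z_1$ and $2\phi=y_2+z_2$, and a direct expansion gives
\[
(y_1^2-4)(y_2^2-4)=(y_1y_2)^2-4(y_1^2+y_2^2)+16=W_1W_2,
\]
so $z_1z_2=\pm\sqrt{W_1W_2}$.

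Because the involution $x\mapsto 1/x$ on roots of $f$ is preserved by the Galois action, $\Gal(f)$ embeds into the centralizer of $(1\,2)(3\,4)$ in $S_4$, which is $D_4$; hence $\Gal(f)\in\{C_2\times C_2,\,C_4,\,D_4\}$. Setting $K:=\Q(\theta)$, I have $y_1=\theta+1/\theta\in K$ and $z_1=\theta-1/\theta\in K^\times$ (note $z_1\ne 0$, since $\theta=\pm 1$ would contradict irreducibility), hence $y_2=-A-y_1\in K$ and $L=K(z_2)$. Dividing $z_1z_2=\pm\sqrt{W_1W_2}$ by $z_1\in K^\times$ yields the key equivalence
\[
L=K \iff \sqrt{W_1W_2}\in K,
\]
and I also record $\sqrt{W_3}=y_1-y_2\in K$.

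The three cases will now fall out from a subfield analysis combined with \eqref{Eq:Delf}, which gives $\Delta(f)=W_1W_2W_3^2$. Since $\Gal(f)\subseteq A_4$ iff $\Delta(f)\in\Q^{*2}$, and $C_2\times C_2$ is the only transitive subgroup of $D_4$ contained in $A_4$, part (2) is immediate: $\Gal(f)\simeq C_2\times C_2$ iff $W_1W_2$ is a square. When $W_1W_2$ is not a square, I distinguish $C_4$ from $D_4$ by whether $L=K$. If $L=K$, then $\Gal(f)\ne C_2\times C_2$, so $\Gal(f)\simeq C_4$ and $L=K$ has a unique quadratic subfield --- necessarily $\Q(\sqrt{W_3})$; then $\sqrt{W_1W_2}\in K\setminus\Q$ must lie in this subfield, forcing $\sqrt{W_1W_2}/\sqrt{W_3}\in\Q$ and hence $W\in\Q^{*2}$. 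Conversely, if $W$ is a square, then $\sqrt{W_1W_2}\in\Q(\sqrt{W_3})\subseteq K$, so $L=K$; moreover $W_1W_2$ and $W$ cannot both be squares (that would force $W_3\in\Q^{*2}$), so $\Gal(f)\simeq C_4$. The remaining possibility is $\Gal(f)\simeq D_4$, corresponding to neither $W_1W_2$ nor $W$ being a square.

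The main obstacle I anticipate is the subfield bookkeeping in the previous paragraph: verifying that $\Q(\sqrt{W_3})$ is the unique quadratic subfield of $K$ in the $C_4$ case, and establishing the mutual exclusivity of the squareness conditions on $W_1W_2$ and $W$, which rests crucially on the fact that $W_3$ is never a square when $f$ is irreducible.
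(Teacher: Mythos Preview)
Your argument is correct. The paper itself does not prove this lemma; it simply records it as \cite[Corollary 4.2]{AP} and uses it as a black box. So your write-up supplies something the paper does not: a self-contained derivation.

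The route you take is the classical one via the reciprocal substitution $y=x+1/x$, and the bookkeeping holds up. A few points worth making explicit when you polish it: (i) the irreducibility of $f$ forces $W_1,W_2\ne 0$ (since $f(\pm1)=W_2,W_1$), which you implicitly use when dividing by $W_1W_2$ to deduce that $W$ and $W_1W_2$ cannot both be squares; (ii) the embedding of $\Gal(f)$ into the centralizer of $(1\,2)(3\,4)$ uses that the four roots split into two \emph{distinct} reciprocal pairs, which follows from $y_1\ne y_2$ (i.e.\ $W_3\ne 0$) and $\theta\ne\pm1$; (iii) in the $C_4$ case the unique quadratic subfield of $K$ exists because $\Gal(K/\Q)\simeq C_4$ has a unique subgroup of index~$2$, and it equals $\Q(\sqrt{W_3})$ since $W_3$ is a nonsquare with $\sqrt{W_3}\in K$. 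With those small clarifications your proof is complete and, compared to invoking \cite{AP}, has the advantage of keeping the paper self-contained with only elementary Galois theory.
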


The following theorem, known as \emph{Dedekind's Index Criterion}, or simply \emph{Dedekind's Criterion} if the context is clear, is a standard tool used in determining the monogenicity of a polynomial.
\begin{thm}[Dedekind \cite{Cohen}]\label{Thm:Dedekind}
Let $K=\Q(\theta)$ be a number field, $T(x)\in \Z[x]$ the monic minimal polynomial of $\theta$, and $\Z_K$ the ring of integers of $K$. Let $p$ be a prime number and let $\overline{ * }$ denote reduction of $*$ modulo $p$ (in $\Z$, $\Z[x]$ or $\Z[\theta]$). Let
\[\overline{T}(x)=\prod_{i=1}^k\overline{\tau_i}(x)^{e_i}\]
be the factorization of $T(x)$ modulo $p$ in $\F_p[x]$, and set
\[h_1(x)=\prod_{i=1}^k\tau_i(x),\]
where the $\tau_i(x)\in \Z[x]$ are arbitrary monic lifts of the $\overline{\tau_i}(x)$. Let $h_2(x)\in \Z[x]$ be a monic lift of $\overline{T}(x)/\overline{h_1}(x)$ and set
\[F(x)=\dfrac{h_1(x)h_2(x)-T(x)}{p}\in \Z[x].\]
Then
\[\left[\Z_K:\Z[\theta]\right]\not \equiv 0 \pmod{p} \Longleftrightarrow \gcd\left(\overline{F},\overline{h_1},\overline{h_2}\right)=1 \mbox{ in } \F_p[x].\]
\end{thm}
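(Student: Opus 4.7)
The theorem asserts a polynomial-level test for the condition $p \nmid [\Z_K:\Z[\theta]]$. The plan is to translate this index condition into a local statement at each prime of $\Z[\theta]$ above $p$, and then unpack that local statement into the stated gcd condition using the identity $h_1 h_2 - T = pF$.

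First, since $\Z[\theta] \subseteq \Z_K$ are free $\Z$-modules of the same rank $n$, we have $p \nmid [\Z_K:\Z[\theta]]$ if and only if the induced map $\Z[\theta]/p\Z[\theta] \to \Z_K/p\Z_K$ is a bijection of $\F_p$-vector spaces, equivalently $\Z_K = \Z[\theta] + p\Z_K$. Next, the Chinese Remainder decomposition
$$\Z[\theta]/p\Z[\theta] \;\cong\; \F_p[x]/(\overline{T}(x)) \;\cong\; \prod_{i=1}^k \F_p[x]/(\overline{\tau_i}(x)^{e_i})$$
identifies the maximal ideals of $\Z[\theta]$ above $p$ as $\mathfrak{p}_i = (p, \tau_i(\theta))$, $i = 1, \ldots, k$. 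Thus I can work one prime at a time.

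The condition $\Z_K = \Z[\theta] + p\Z_K$ is equivalent to saying that for each $i$, the localization $\Z[\theta]_{\mathfrak{p}_i}$ coincides with its integral closure in $K$; since this local ring is one-dimensional and Noetherian, this is the same as being a discrete valuation ring. When $e_i = 1$, one checks that $\mathfrak{p}_i \Z[\theta]_{\mathfrak{p}_i}$ is already principal (generated by $\tau_i(\theta)$ after inverting a unit coming from the coprimality of $\tau_i$ with the remaining factors), so no obstruction arises. Consequently the only question concerns the indices $i$ with $e_i \geq 2$.

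The core step is to translate the DVR condition at each such $\mathfrak{p}_i$ into the divisibility statement $\overline{\tau_i} \nmid \overline{F}$. Evaluating $h_1 h_2 - T = pF$ at $\theta$ gives the key identity $h_1(\theta)h_2(\theta) = pF(\theta)$ in $\Z[\theta]$. Combined with the congruences $h_1 \equiv \prod_j \tau_j$ and $h_2 \equiv \prod_j \tau_j^{e_j - 1} \pmod{p}$, this lets me produce, whenever $\overline{\tau_i}$ divides all three of $\overline{F}, \overline{h_1}, \overline{h_2}$ (which in particular forces $e_i \geq 2$, since $\gcd(\overline{h_1}, \overline{h_2}) = \prod_{e_j \geq 2} \overline{\tau_j}$), an explicit element of the form $q(\theta)/p \in \Z_K \setminus \Z[\theta]$, witnessing $p \mid [\Z_K:\Z[\theta]]$. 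Conversely, when no such $\overline{\tau_i}$ occurs, I would verify that $\tau_i(\theta)$ serves as a uniformizer in $\Z[\theta]_{\mathfrak{p}_i}$ for each $i$ with $e_i \geq 2$, confirming the DVR property at every prime above $p$. The main obstacle is the forward direction: choosing the right polynomial $q(x)$ so that $q(\theta)/p$ is integral over $\Z$ yet outside $\Z[\theta]$, which requires exhibiting a monic polynomial in $\Z[x]$ that $q(\theta)/p$ satisfies, using the factorization of $\overline{T}$ and the hypothesis that $\overline{\tau_i}$ divides $\overline{F}$.
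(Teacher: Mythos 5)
The paper offers no proof of this statement: it is quoted as Dedekind's index criterion with a citation to Cohen's book, so there is no in-paper argument to measure you against. On its own terms, your outline follows the classical route --- reduce $p\nmid[\Z_K:\Z[\theta]]$ to integral closedness of the localizations at the primes $\mathfrak{p}_i=(p,\tau_i(\theta))$, dispose of the primes with $e_i=1$, and exploit the identity $h_1h_2-T=pF$ at the rest --- and the reductions you actually carry out are correct: the index condition is local at $p$, the CRT identification of the $\mathfrak{p}_i$ is right, and $\gcd(\overline{h_1},\overline{h_2})=\prod_{e_j\ge 2}\overline{\tau_j}$ correctly isolates the repeated factors.

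The gap is that both halves of the biconditional are left as declarations of intent, and they are the entire content of the theorem. For the forward direction you propose to certify integrality of your witness $q(\theta)/p$ by ``exhibiting a monic polynomial in $\Z[x]$ that $q(\theta)/p$ satisfies.'' That is the wrong tool and would be very painful to execute; in the standard argument one takes $\overline{g_1}=\overline{h_1}/\overline{\tau_i}$, sets $\alpha=g_1(\theta)h_2(\theta)/p$, and proves integrality not by finding its minimal polynomial but by showing $\alpha$ stabilizes a finitely generated faithful $\Z$-module (for instance $\alpha I_p\subseteq I_p$ for the $p$-radical $I_p=(p,h_1(\theta))$ of $\Z[\theta]$, using $\overline{\tau_i}\mid\overline{F}$ together with $h_1(\theta)h_2(\theta)=pF(\theta)$) and then invoking the determinant trick; one must also separately check $\alpha\notin\Z[\theta]$. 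For the converse you assert that $\tau_i(\theta)$ ``serves as a uniformizer'' when $\overline{\tau_i}\nmid\overline{F}$, but give no mechanism for producing $p\in\tau_i(\theta)\Z[\theta]_{\mathfrak{p}_i}$; the point is that $\overline{\tau_i}\nmid\overline{F}$ makes $F(\theta)$ a unit at $\mathfrak{p}_i$, so $p=h_1(\theta)h_2(\theta)F(\theta)^{-1}$ locally, and one still needs a Nakayama-type argument to untangle the resulting congruence and conclude $\mathfrak{p}_i\Z[\theta]_{\mathfrak{p}_i}=(\tau_i(\theta))$. Until those two steps are written out, what you have is a correct plan whose missing details are precisely the proof of Theorem 6.1.4 in Cohen.
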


\section{The Proof of Theorem \ref{Thm:Main1}}
\begin{proof} We present the proof in sections for the convenience of the reader.
\subsection*{The proof of item \eqref{M:I10}}\hfill\\
      Note that $f(x)$ is irreducible over $\Q$ by  item \eqref{M1:I3} of Theorem \ref{Thm:Main1} since $A\not \in \{0,\pm 1\}$ and $B=0$. Observe that
  \begin{equation*}\label{Eq:Delf1}
  \Delta(f)=-4(A-1)(A+1)(A^2+8)^2
  \end{equation*} from \eqref{Eq:Delf}. For prime divisors $p$ of $\Delta(f)$, we use Theorem \ref{Thm:Dedekind} with $T(x):=f(x)$ to determine necessary and sufficient conditions for the monogenicity of $T(x)$. Let $K=\Q(\theta)$ with ring of integers $\Z_K$, where $T(\theta)=0$.

\subsection*{The special case of $p=2$}\hfill\\ 
Suppose first that $p=2$. Then
  \[\overline{T}(x)=\left\{
  \begin{array}{cl}
  (x+1)^4 & \mbox{if $2\mid A$}\\[.5em]
  (x+1)^2(x^2+x+1)& \mbox{if $2\nmid A$.}
  \end{array}\right.\] Thus, in Theorem \ref{Thm:Dedekind}, we can let
  \[\begin{array}{cl}
  h_1(x)=x+1 \quad \mbox{and} \quad h_2(x)=(x+1)^3 & \quad \mbox{if $2\mid A$}\\[.5em]
  h_1(x)=(x+1)(x^2+x+1) \quad \mbox{and} \quad h_2(x)=(x+1) & \quad \mbox{if $2\nmid A$.}
  \end{array}\]  Hence, in either case, we simply have to calculate $\overline{F}(-1)$ to determine whether or not
  $\gcd(\overline{F},\overline{h_1},\overline{h_2})=1$. Since
  \[F(x)=\frac{h_1(x)h_2(x)-T(x)}{2}=\left\{\begin{array}{cl}
  \left(\dfrac{-A+4}{2}\right)x^3+3x^2+\left(\dfrac{-A+4}{2}\right)x & \mbox{if $2\mid A$}\\[1em]
  \left(\dfrac{-A+3}{2}\right)x^3+2x^2+\left(\dfrac{-A+3}{2}\right)x & \mbox{if $2\nmid A$,}
  \end{array}\right.\] it follows that $F(-1)=A-1$ in both cases. Thus,
  \[\overline{F}(-1)=\left\{\begin{array}{cl}
    1 & \mbox{if $2\mid A$}\\[.5em]
    0 & \mbox{if $2\nmid A$,}
  \end{array}\right.\] which implies that
   \[\gcd(\overline{F},\overline{h_1},\overline{h_2})=1 \quad \mbox{if and only if} \quad 2\mid A.\] Consequently,
   \[2\nmid [\Z_K:\Z[\theta]] \quad \mbox{if and only if} \quad 2\mid A.\] Therefore, we assume that $2\mid A$ and proceed.
\subsection*{The case $p\mid (A-1)$}\hfill\\
   Suppose next that $p\mid (A-1)$, so that $p\ge 3$. Then
   \[{T}(x)\equiv (x+1)^2(x^2-x+1) \pmod{p}.\] It follows that
    \[\overline{T}(x)=\left\{
  \begin{array}{cl}
  (x+1)^4 & \mbox{if $p=3$}\\[.5em]
  (x+1)^2(x^2-x+1)& \mbox{if $p\equiv 2 \pmod{3}$}\\[.5em]
  (x+1)^2(x-r_1)(x-r_2)& \mbox{if $p\equiv 1 \pmod{3}$,}
  \end{array}\right.\]
  where $r_i\in \Z$, such that 
  \[r_1\equiv (1+\sqrt{-3})/2 \pmod{p} \quad \mbox{and} \quad r_2\equiv (1-\sqrt{-3})/2 \pmod{p}.\] Thus, in Theorem \ref{Thm:Dedekind}, we can let
  \[\begin{array}{cl}
   h_1(x)=(x+1) \quad \mbox{and}\quad h_2(x)=(x+1)^3 & \mbox{if $p=3$}\\[.5em]
   h_1(x)=(x+1)(x^2-x+1) \quad \mbox{and}\quad h_2(x)=x+1 & \mbox{if $p\equiv 2 \pmod{3}$}\\[.5em]
   h_1(x)=(x+1)(x-r_1)(x-r_2) \quad \mbox{and}\quad h_2(x)=x+1 & \mbox{if $p\equiv 1 \pmod{3}$.}
  \end{array}\]
  Hence, we see once again that to determine $\gcd(\overline{F},\overline{h_1},\overline{h_2})$, we only need to calculate $\overline{F}(-1)$. Straightforward calculations reveal that
  \[F(-1)=\dfrac{2(A-1)}{p}\]
  in all cases. It follows that
  \[\gcd(\overline{F},\overline{h_1},\overline{h_2})=1 \ \iff \ p\nmid [\Z_K:\Z[\theta]] \ \iff \ p^2\nmid (A-1).\] Consequently, another necessary condition for the monogencicity of $f(x)$ is that $A-1$ is squarefree. Assume then that $A-1$ is squarefree, and proceed.

  \subsection*{The case $p\mid (A+1)$}\hfill\\
  The case when $p\mid (A+1)$ is similar to the case when $p\mid (A-1)$, and we omit the details. In this situation, we have that $p\ge 3$ since $2\mid A$, and we need to calculate
  \[F(1)=-\dfrac{2(A+1)}{p},\]
  which implies that $A+1$ must be squarefree if $f(x)$ is monogenic. Assume that $A+1$ is squarefree, and proceed.

\subsection*{The case $p\mid (A^2+8)$}\hfill\\
  Finally, suppose that $p\mid (A^2+8)$, where $p\ge 3$. Since $2\mid A$, we have that $p\mid ((A/2)^2+2)$. Then
  \begin{align}\label{Eq:Tbar}
  \nonumber T(x)&\equiv x^4+Ax^3+((A/2)^2+2)x^2+Ax+1 \pmod{p}\\
  &\equiv (x^2+(A/2)x+1)^2 \pmod{p},
  \end{align} Let 
  \begin{equation}\label{Eq:gamma}
  \gamma(x):=x^2+(A/2)x+1.
  \end{equation}
  There are several subcases to consider.

  \subsection*{The subcase $p\mid (A^2+8)$ and $p\mid (A^2-16)$}\hfill\\
  If $p\mid (A^2-16)$, then $\gamma(x)$ has a double root. Since $A^2\equiv -8 \pmod{p}$, we have that $p\mid 24$, so that $p=3$.
  If $3\mid (A-4)$, then $A\equiv 1 \pmod{3}$ and $\overline{T}(x)=(x+1)^4$. Easy computations then show that
  \[F(-1)=\dfrac{2(A-1)}{3}\not \equiv 0 \pmod{3},\] since $A-1$ is squarefree.
  If $3\mid (A+4)$, then $\overline{T}(x)=(x-1)^4$, and
  \[F(1)=\dfrac{-2(A+1)}{3}\not \equiv 0 \pmod{3},\] since $A+1$ is squarefree.

\subsection*{The subcase $p\mid (A^2+8)$, $p\nmid (A^2-16)$ and $A^2-16$ is a square in $\F_p$}\hfill\\
   If $p\nmid (A^2-16)$, but $A^2-16$ is a square in $\F_p$, then $\gamma(x)$ has two distinct roots in $\F_p$, namely,
  \[\dfrac{-A\pm \sqrt{A^2-16}}{4}\equiv \dfrac{-(A/2)\pm \sqrt{-6}}{2} \pmod{p}.\] Thus,
  \[\overline{T}(x)=\left(x-\dfrac{-A/2+\sqrt{-6}}{2}\right)^2\left(x-\dfrac{-A/2-\sqrt{-6}}{2}\right)^2.\] Therefore, we can let
  \[h_1(x)=h_2(x)=\left(x-\dfrac{p+1}{2}\left(-A/2+s\right)\right)\left(x-\dfrac{p+1}{2}\left(-A/2-s\right)\right)\] in Theorem \ref{Thm:Dedekind}, where $s\in \Z$ is such that $s^2\equiv -6\pmod{p}$. Then, using Maple, we calculate
   \[\overline{F}(x)=\overline{A}x^3+\overline{3\left(\dfrac{A^2+8}{8p}\right)}x^2\\
  +\overline{\left(3A+\dfrac{A(A^2+8)}{16p}\right)}x+\overline{\left(\dfrac{A^2+8}{8p}+4\right)}.\]
  Next, with $r=(p+1)\left(-A/2\pm s\right)/2$, we calculate
\begin{equation}\label{Eq:F(r)}
\overline{F}(r)=\dfrac{(-8\mp A\sqrt{-6})((A/2)^2+2)}{4p}.
\end{equation} Observe that
\begin{align*}
  -8\mp A\sqrt{-6}\equiv 0 \Mod{p}&\Longrightarrow 64\equiv -6A^2\Mod{p}\\
  &\Longrightarrow 64\equiv 48 \Mod{p}\\
  &\Longrightarrow 16\equiv 0 \Mod{p},
\end{align*} which yields the contradiction that $p=2$. Hence, we conclude from \eqref{Eq:F(r)} that
\begin{equation}\label{Eq:gcd}
\gcd(\overline{F},\overline{h_1},\overline{h_2})=1 \quad \mbox{if and only if}\quad p^2\nmid ((A/2)^2+2),
\end{equation}
which implies that a necessary condition for the monogenicity of $f(x)$ is that $(A/2)^2+2$ is squarefree.

\subsection*{The subcase $p\mid (A^2+8)$, $p\nmid (A^2-16)$ and $A^2-16$ is not a square in $\F_p$}\hfill\\
Finally, the last situation to consider is that $\gamma(x)$ in \eqref{Eq:gamma} is irreducible over $\F_p$. Then
$\overline{T}(x)$ is given in \eqref{Eq:Tbar}, and we can let
\[h_1(x)=h_2(x)=\gamma(x).\] Thus, straightforward computations show that
\[\overline{F}(x)=\left(\overline{\dfrac{(A/2)^2+2}{p}}\right)x^2,\] so that \eqref{Eq:gcd} follows in this situation as well, which completes the proof of item \eqref{M:I10}.

\subsection*{The proof of item \eqref{M:I11}}\hfill\\
Next, for item \eqref{M:I11}, because $2\mid A$, we can let $A=2k$ for some integer $k$. Then, we can rewrite \eqref{Eq:Delf} as
\[\Delta(f)=-16(2k-1)(2k+1)(k^2+2)^2.\] By \cite{BB} (see also \cite[Corollary 2.7]{JW}), there exist infinitely many integers $k$ such that

\[(2k-1)(2k+1)(k^2+2)=(A-1)(A+1)((A/2)^2+2)\] is squarefree. Then, for each such value of $k$, we have that $A-1$, $A+1$ and $(A/2)^2+2$ are all squarefree, which proves that the set $\FF$ is infinite.

\subsection*{The proof of item \eqref{M:I12}}\hfill\\
For item \eqref{M:I12}, let $f(x)\in \FF$. Thus, if either
\[W_1W_2=4(1-A^2) \quad \mbox{or} \quad W=4(1-A^2)(A^2+8)\]
is a square, then $1-A^2\ge 0$, which implies the contradiction $A\in \{0,\pm 1\}$. It follows from Lemma \ref{Lem:AP} that $\Gal(f)\simeq D_4$.

\subsection*{The proof of item \eqref{M:I13}}\hfill\\
For item \eqref{M:I13}, we let
\[f_i(x):=x^4+A_ix^3+A_ix+1\in \FF,\] for $i\in \{1,2\}$. Observe that if $A_2=-A_1$ and $f_1(\theta)=0$, then $f_2(-\theta)=0$, so that $f_1(x)$ is equivalent to $f_2(x)$. Hence, we can focus on the set $\FF^{+}$. Assume then that $A_i\ge 2$ with $A_1\ne A_2$. Let $K_i=\Q(\theta_i)$, where $f_i(\theta_i)=0$. Suppose that $K_1=K_2$. Thus, since each $f_i(x)$ is monogenic, it follows from Theorem \ref{Thm:mono}, that
\[\Delta(f_1)=\Delta(K_1)=\Delta(K_2)=\Delta(f_2),\]
which implies that
\begin{equation}\label{Eq:DiscEquation}
(A_1-1)(A_1+1)(A_1^2+8)^2=(A_2-1)(A_2+1)(A_2^2+8)^2
\end{equation}
from \eqref{Eq:Delf}. Using Maple to solve \eqref{Eq:DiscEquation}, we get several solutions. All solutions with $A_1\ne \pm A_2$ require that
\[2A_1^2+A_2^2+15=\pm \sqrt{-3(A_2-1)(A_2+1)(A_2^2+11)},\] which is impossible since $A_2\ge 2$, and the proof of item \eqref{M:I1} is complete.

\subsection*{The proof of item \eqref{M:I2}}\hfill\\
Since $W_3$ is not a square, we have that $f(x)$ is irreducible over $\Q$ by item \eqref{M1:I4} of Theorem \ref{Thm:Main1}.
Recall $\Delta(f)$ from \eqref{Eq:Delf}.

As in the proof of item \eqref{M:I1}, we use Theorem \ref{Thm:Dedekind} where $T(x):=f(x)$, together with the prime divisors $p$ of $\Delta(f)$, to determine necessary and sufficient conditions for the monogenicity of $T(x)$. Let $K=\Q(\theta)$ with ring of integers $\Z_K$, where $T(\theta)=0$.

\subsection*{The special case of $p=2$}\hfill\\
Recall $\Delta(f)$ from \eqref{Eq:Delf}. Suppose that $2\mid \Delta(f)$. Then we have the following possibilities.

\subsection*{The subcase $2\mid W_1$ and $2\mid W_2$}\hfill\\
Note that $2\mid W_1$ if and only if $2\mid W_2$.
Since $B\equiv 2A-2\equiv 0 \pmod{2}$, we have
\[T(x)\equiv (x+1)^2(x^2+Ax+1) \pmod{2}.\]
Hence,
\[\overline{T}(x)=\left\{\begin{array}{cl}
 (x+1)^4 & \mbox{if $2\mid A$}\\[.5em]
 (x+1)^2(x^2+x+1) & \mbox{if $2\nmid A$.}
\end{array}\right.\] Thus, we can let
\begin{align}\label{Eq:h1h2p=2}
\begin{split}
h_1(x)&=x+1 \quad \mbox{and} \quad h_2(x)=(x+1)^3 \quad  \mbox{if $2\mid A$, and}\\
h_1(x)&=(x+1)(x^2+x+1) \quad \mbox{and} \quad h_2(x)=x+1 \quad \mbox{if $2\nmid A$}.
\end{split}
\end{align} Hence,
\[\overline{F}(x)=\left\{\begin{array}{cl}
  x\left(\overline{\left(\dfrac{A}{2}\right)}x^2+\overline{\left(\dfrac{B+2}{2}\right)}x+ \overline{\left(\dfrac{A}{2}\right)}\right) & \mbox{if $2\mid A$}\\[1.5em]
  x\left(\overline{\left(\dfrac{A+1}{2}\right)}x^2+\overline{\left(\dfrac{B}{2}\right)}x+ \overline{\left(\dfrac{A+1}{2}\right)}\right) & \mbox{if $2\nmid A$.}
 \end{array}\right.\] Note from \eqref{Eq:h1h2p=2} that we only need to calculate $\overline{F}(-1)$ in any case to determine if $\gcd(\overline{F},\overline{h_1},\overline{h_2})=1$. Thus,
 \[\overline{F}(-1)=\left\{\begin{array}{cl}
   0 & \mbox{if $2\mid A$ and $2\mid \mid B \ $ or $ \ 2\nmid A$ and $2^2\mid B$}\\[.5em]
   1 & \mbox{if $2\mid A$ and $2^2\mid B \ $ or $ \ 2\nmid A$ and $2\mid \mid B$.}
 \end{array}\right.\] Consequently,
 \[2\nmid [\Z_K:\Z[\theta]] \ \iff \ (A \mmod{4}, \ B \mmod{4})\in \{(0,0),(2,0),(1,2),(3,2)\}.\]

\subsection*{The subcase $2\nmid W_1W_2$ and $2\mid W_3$}\hfill\\
Note that $2\nmid B$ since $2\nmid W_1W_2$, and $2\mid A$ since $2\mid W_3$. Then
\[\overline{T}(x)=(x^2+x+1)^2 \pmod{2}.\]
Thus, with
\[h_1(x)=h_2(x)=x^2+x+1,\] in Theorem \ref{Thm:Dedekind}, we get that
\[\overline{F}(x)=x\left(\overline{\left(\dfrac{2-A}{2}\right)}x^2+\overline{\left(\dfrac{3-B}{2}\right)}x+\overline{\left(\dfrac{2-A}{2}\right)}\right).\] It follows that $\gcd(\overline{F},\overline{h_1},\overline{h_2})\ne 1$ if and only if
\[\left(\overline{\left(\dfrac{2-A}{2}\right)},\overline{\left(\dfrac{3-B}{2}\right)}\right)\in \{(0,0),(1,1)\},\]
 if and only if
 \[(A \mmod{4}, \ B \mmod{4})\in \{(2,3),(0,1)\}.\] 
 Consequently,
 \[2\nmid [\Z_K:\Z[\theta]] \ \iff \ (A \mmod{4}, \ B \mmod{4})\in \{(0,3),(2,1)\}.\]

 \subsection*{The case $p\mid W_1$ with $p\ge 3$}\hfill\\
 If $p\mid W_1$ with $p\ge 3$, then
 \[T(x)\equiv (x+1)^2\gamma(x) \pmod{p},\]
 where
\begin{equation*}\label{Eq:gam p not 2}
\gamma(x)=x^2+(A-2)x+1.
\end{equation*} There are then the following three subcases to consider for $\gamma(x)$. 

\subsection*{The subcase that $\gamma(x)$ has a double root in $\F_p$}\hfill\\
  If $\gamma(x)$ has a double root in $\F_p$, then $\Delta(\gamma)=A(A-4)\equiv 0 \pmod{p}$. Thus, there are two possibilities to consider here, namely, $p\mid A$ and $p\mid (A-4)$.

If $p\mid A$, then note that $p\mid W_2$. Also,
 \[\gamma(x)\equiv (x-1)^2\pmod{p}\] so that
 \[\overline{T}(x)=(x+1)^2(x-1)^2.\] With
 \[h_1(x)=h_2(x)=(x+1)(x-1)\] in Theorem \ref{Thm:Dedekind}, we get
 \[\overline{F}(x)=-x\left(\overline{\left(\dfrac{A}{p}\right)}x^2+\overline{\left(\dfrac{B+2}{p}\right)}x+\overline{\left(\dfrac{A}{p}\right)}\right).\]
 Then
 \begin{align*}
   \overline{F}(-1)&=-\overline{\left(\dfrac{W_1}{p}\right)}=0 \quad \mbox{if and only if} \quad p^2\mid W_1\\
     \mbox{and} \quad \overline{F}(1)&=-\overline{\left(\dfrac{W_2}{p}\right)}=0 \quad \mbox{if and only if} \quad p^2\mid W_2.
 \end{align*}

 Similarly, if $p\mid (A-4)$, then $\overline{T}(x)=(x+1)^4$ and
 \[\overline{F}(x)=-x\left(\overline{\left(\dfrac{A-4}{p}\right)}x^2+\overline{\left(\dfrac{B-6}{p}\right)}x+\overline{\left(\dfrac{A-4}{p}\right)}\right),\]
 so that
 \[\overline{F}(-1)=-\overline{\left(\dfrac{W_1}{p}\right)}=0 \quad \mbox{if and only if} \quad p^2\mid W_1.\] Thus,
 necessary conditions, in this case, so that $f(x)$ is monogenic are that both $W_1$ and $W_2$ are squarefree.

\subsection*{The subcase that $\gamma(x)$ has two distinct roots in $\F_p$}\hfill\\
Suppose next that $p\nmid A(A-4)$, but that $A(A-4)$ is a square in $\F_p$, so that $\gamma(x)$ has two distinct roots in $\F_p$. Then
 \[\gamma(x)\equiv (x+r)(x+s) \pmod{p},\] where $r,s\in \Z$ such that $r+s\equiv A-2 \pmod{p}$ and $rs\equiv 1 \pmod{p}$. Thus, we can let
 \[h_1(x)=(x+1)(x+r)(x+s) \quad \mbox{and} \quad h_2(x)=x+1\]
 in Theorem \ref{Thm:Dedekind}. Observe then that we only need to calculate $\overline{F}(-1)$ to determine whether $\gcd(\overline{F},\overline{h_1},\overline{h_2})=1$. That is,
 \[\overline{F}(-1)\ne 0 \quad \mbox{if and only if}\quad \gcd(\overline{F},\overline{h_1},\overline{h_2})=1 \quad \mbox{if and only if} \quad p\nmid [\Z_K:\Z[\theta]].\] Using Maple, we get that
 \begin{multline*}
 \overline{F}(x)=\overline{\left(\dfrac{r+s-(A-2)}{p}\right)}x^3+\overline{\left(\dfrac{2(r+s)+rs-B+1}{p}\right)}x^2\\
 +\overline{\left(\dfrac{r+s+2rs-A}{p}\right)}x+\overline{\left(\dfrac{rs-1}{p}\right)},
 \end{multline*}
 and therefore,
 \[\overline{F}(-1)=-\overline{\left(\dfrac{W_1}{p}\right)}=0 \quad \mbox{if and only if} \quad p^2\mid W_1.\]
 Thus, a necessary condition for the monogenicity of $f(x)$ in this case is that $W_1$ is squarefree.

\subsection*{The subcase that $\gamma(x)$ is irreducible over $\F_p$}\hfill\\
 In this situation, omitting the details, we get that
 \[\overline{F}(x)=-\overline{\left(\dfrac{W_1}{p}\right)}x^2.\] Hence,
 \[\overline{F}(-1)=-\overline{\left(\dfrac{W_1}{p}\right)}=0 \quad \mbox{if and only if} \quad p^2\mid W_1.\]

 \subsection*{The case $p\mid W_2$ with $p\ge 3$}\hfill\\
  The case when $p\mid W_2$ is similar to $p\mid W_1$, with some minor differences. In this particular situation we get that
 \[T(x)\equiv (x-1)^2(x^2+(A+2)x+1)\pmod{p},\] and we end up checking only $\overline{F}(1)$.
  The conditions we get for the monogenicity of $f(x)$ simplify interchange $W_1$ and $W_2$ from the case when $p\mid W_1$, and so we omit the details.

 \subsection*{The case $p\mid W_3$ with $p\ge 3$}\hfill\\
 If $p\mid W_3$, then $B\equiv (A^2+8)/4 \pmod{p}$. Hence,
 \[T(x)\equiv (x^2+(A/2)x+1)^2 \pmod{p}.\]
 We let $\gamma(x):=x^2+(A/2)x+1$.
 Since the analysis is similar to the previous situations, we just give a summary of the four cases for $\gamma(x)$.
 \subsection*{$\gamma(x)$ has a double root in $\F_p$; $p\mid (A-4)$}
 \begin{itemize}
 \setlength\itemsep{.5em}
   \item $\overline{T}(x)=(x+1)^4$;
   \item $\overline{F}(x)=-\overline{\left(\dfrac{A-4}{p}\right)}x^3-\overline{\left(\dfrac{B-6}{p}\right)}x^2-\overline{\left(\dfrac{A-4}{p}\right)}x$;
   \item $\overline{F}(-1)=-\overline{\left(\dfrac{W_1}{p}\right)}$.
 \end{itemize}
Conclusion:
\[p\nmid [\Z_K:\Z[\theta]] \ \iff \ p^2\nmid W_1.\]
 \subsection*{$\gamma(x)$ has a double root in $\F_p$; $p\mid (A+4)$}
 \begin{itemize}
 \setlength\itemsep{.5em}
   \item $\overline{T}(x)=(x-1)^4$;
   \item $\overline{F}(x)=-\overline{\left(\dfrac{A+4}{p}\right)}x^3-\overline{\left(\dfrac{B-6}{p}\right)}x^2-\overline{\left(\dfrac{A+4}{p}\right)}x$;
   \item $\overline{F}(1)=-\overline{\left(\dfrac{W_2}{p}\right)}$.
 \end{itemize}
Conclusion:
\[p\nmid [\Z_K:\Z[\theta]] \ \iff \ p^2\nmid W_2.\]
 \subsection*{$\gamma(x)$ has two distinct roots in $\F_p$}
 \begin{itemize}
 \setlength\itemsep{.5em}
   \item $\overline{T}(x)=(x+r)^2(x+s)^2$,\\
    where $r,s\in Z$ with $r+s\equiv A/2 \pmod{p}$ and $rs\equiv 1 \pmod{p}$;
   \item $\overline{F}(x)=\overline{\left(\dfrac{2(r+s)-A}{p}\right)}x^3+\overline{\left(\dfrac{(r+s)^2+2rs-B}{p}\right)}x^2$\\
       $\hspace*{2in} +\overline{\left(\dfrac{2rs(r+s)-A}{p}\right)}x+\overline{\left(\dfrac{r^2s^2-1}{p}\right)}$;
   \item $\overline{F}(-r)=\overline{F}(-s)=\overline{\left(\dfrac{W_3}{4p}\right)}$.
 \end{itemize}
Conclusion:
\[p\nmid [\Z_K:\Z[\theta]] \ \iff \ p^2\nmid W_3.\]
 \subsection*{$\gamma(x)$ is irreducible in $\F_p[x]$}
 \begin{itemize}
 \setlength\itemsep{.5em}
   \item $\overline{T}(x)=\left(x^2+\left(\dfrac{p+1}{2}\right)x+1\right)^2$;
   \item $\overline{F}(x)=\overline{A}x^3+\overline{\left(\dfrac{W_3}{p}+\dfrac{A^2}{2}\right)}x^2+\overline{A}x$\\
   $=\overline{A}x\left(x^2+\overline{\left(\dfrac{A}{2}\right)}x+1\right) \ \iff \ p^2\mid W_3$.
      \end{itemize}
Conclusion:
\[p\nmid [\Z_K:\Z[\theta]] \ \iff \ p^2\nmid W_3.\]
Combining all of these results establishes item \ref{M:I21} under item \eqref{M:I2}.

\subsection{The proof of item \ref{M:I22} under item \eqref{M:I2}}\label{SS:MI22}\hfill\\
It follows from item \eqref{M1:I1}, and by the definition of $\FF_i$, that the sets $\FF_i$ are mutually exclusive and their union contains all monogenic quartic polynomials of these forms. Note that if $f^{+}(\theta)=0$, where
\[f^{+}(x)=x^4+Ax^3+Bx^2+Ax+1 \in \FF_i,\] then
\[f^{-}(x)=x^4-Ax^3+Bx^2-Ax+1\in \FF_i\] and  $f^{-}(-\theta)=0$. Hence, $f^{+}(x)$ and $f^{-}(x)$ are equivalent, and no $\FF_i$ consists solely of distinct polynomials. Nevertheless, for each $i\in \{1,2,3,4\}$, we are able to construct an infinite subset of $\FF_i$, such that all polynomials in the subset are distinct; and for $\FF_5$, we can determine completely a maximal subset consisting solely of distinct polynomials.

\subsection*{The proof that $\FF_1$ contains infinitely many distinct polynomials}\hfill\\
Suppose that $A\ge 6$. Since $2\mid A$, we can write $A=2k$ for some $k\in \Z$ with $k\ge 3$. Let $B=4$.
By \cite{BB,JW}, there exist infinitely many positive integers $k$ such that
$(3-2k)(3+2k)(k^2-2)$ is squarefree, which implies that $3-2k$, $3+2k$ and $k^2-2$ are all squarefree. Note then that $6-4k$ and $6+4k$ are also squarefree. Consequently, for such a value of $k$, the polynomial
\[f_k(x)=x^4+2kx^3+4x^2+2kx+1\] has
\[W_1=6-4k,\quad W_2=6+4k \quad \mbox{and} \quad W_3=4k^2-8,\] where $W_1$, $W_2$ and $W_3/4$ are squarefree. Note also that $W_3$ is not a square since $k^2-2\ne 1$, so that $f(x)$ is irreducible over $\Q$ by item \eqref{M1:I4} of Theorem \ref{Thm:Main1}. Thus, $f_k(x)\in \FF_1$. Let $\FF_{1,k}$ be the subset of $\FF_1$ defined by
\[\FF_{1,k}:=\{f_k(x): (3-2k)(3+2k)(k^2-2) \ \mbox{is squarefree}\}.\] Suppose that $f_{k_1}(x), f_{k_2}(x)\in \FF_{1,k}$, with $k_1>k_2\ge 3$, are equivalent. Since $f_{k_i}(x)$ is monogenic, we have from Theorem \ref{Thm:mono} that $\Delta(f_{k_1})=\Delta(f_{k_2})$, which implies that
\[4(3-2k_1)(3+2k_1)(4k_1^2-8)^2=4(3-2k_2)(3+2k_2)(4k_2^2-8)^2.\]
Solving this equation using Maple we get that $k_1$ must be a root of the polynomial
\[H(z)=4z^4+(4k_2^2-25)z^2+4k_2^4-25k_2^2+52.\]
Recall that $k_2\ge 3$, so that $4k_2^2-25>0$. Hence, it follows that $H(z)$ has no real roots, and thus, the polynomials in the infinite set $\FF_{1,k}$ are all distinct.
\subsection*{The proof that $\FF_2$ contains infinitely many distinct polynomials}\hfill\\
Since the proof is similar to the proof for $\FF_1$, we omit most of the details. In this case, we let $A=2k$ for some $k\in \Z$ with $k\ge 2$, and we let $B=3$. Let
\[f_k(x)=x^4+2kx^3+3x^2+2kx+1,\] and let $\FF_{2,k}$ be the subset of $\FF_2$ defined by
\[\FF_{2,k}:=\{f_k(x): (5-4k)(4k+5)(k^2-1) \ \mbox{is squarefree}\}.\] Since there exist infinitely many positive integers $k$ such that $(5-4k)(4k+5)(k^2-1)$ is squarefree, we have that $\FF_{2,k}$ is infinite. Then, the same techniques used in the proof of $\FF_1$ show that the polynomials in $\FF_{2,k}$ are all distinct.

\subsection*{The proof that $\FF_3$ contains infinitely many distinct polynomials}\hfill\\
Suppose that $A\ge 11$ and $B=10$. Since $2\nmid A$, we let $A=2k+1$ for some integer $k\ge 5$. By \cite{BB,JW}, there exist infinitely many positive integers $k$ such that
\[(5-2k)(7+2k)(4k^2+4k-31)\] is squarefree, which implies that
\[W_1=10-4k,\quad W_2=14+4k \quad \mbox{and} \quad W_3=4k^2+4k-31\] are all squarefree. Note that if $W_3$ is a square, then $W_3=1$ which yields the contradiction that $k=(-1\pm \sqrt{33})/2$. Thus, $W_3$ is not a square. Then, for
 such a value of $k$, we have
\[f_k(x):=x^4+(2k+1)x^3+6x^2+(2k+1)x+1\in \FF_3.\] Let $\FF_{3,k}$ be the subset of $\FF_3$ defined by
\[\FF_{3,k}:=\{f_k(x): (5-2k)(7+2k)(4k^2+4k-31) \ \mbox{is squarefree}\}.\] Suppose that $f_{k_1}(x), f_{k_2}(x)\in \FF_{3,k}$, with $k_1>k_2\ge 5$, are equivalent. Since $f_{k_i}(x)$ is monogenic, we have from Theorem \ref{Thm:mono} that $\Delta(f_{k_1})=\Delta(f_{k_2})$, which implies that
\[-4(2k_1+7)(2k_1-5)(4k_1^2+4k_1-31)^2=-4(2k_2+7)(2k_2-5)(4k_2^2+4k_2-31)^2.\]
Solving this equation using Maple we get that $k_1$ must be a root of the polynomial
\[H(z)=16z^4+32z^3+(C+16)z^2+Cz+(k_2^2+k_2)C+3131,\]
where
\[C:=16k_2^2+16k_2-388\ge 92,\]
since $k_2\ge 5$. Then
\[H'(z)=(2z+1)(32z^2+32z+C),\] so that
$H(z)$ has a unique minimum at $z=-1/2$. Since
\[H(-1/2)=(4k_2^2+4k_2-1)C/4\ge 2737,\]
it follows that $H(z)$ has no real roots. Therefore, we conclude that the polynomials in the infinite set $\FF_{3,k}$ are all distinct.

\subsection*{The proof that $\FF_4$ contains infinitely many distinct polynomials}\hfill\\
Suppose that $A\ge 3$ and $B=1$. Since $2\nmid A$, we let $A=2k+1$ for some integer $k\ge 1$. Then,
\[W_1=1-4k,\ W_2=4k+5 \ \mbox{and} \ W_3=4k^2+4k+5.\]
By \cite{BB,JW}, there exist infinitely many positive integers $k$ such that $W$ is squarefree. Hence, for such a value of $k$, we have that each of $W_1$, $W_2$ and $W_3$ is squarefree. Thus, if $W_3$ is a square, it must be that $W_3=1$, which yields the contradiction that $k=(-1\pm \sqrt{-3})/2$. Similarly, if $W$ is a square, then $W=1$ since $W$ is squarefree. However, solving the equation $W=1$ using Maple shows that the solutions are
\[-1/2\pm \sqrt{-14-2\sqrt{609}}/8 \ \mbox{and} \ -1/2\pm \sqrt{-14+2\sqrt{609}}/8.\]
Thus, $W$ is not a square. Let
\[f_k(x):=x^4+(2k+1)x^3+x^2+(2k+1)x+1\in \FF_4,\] and let $\FF_{4,k}$ be the subset of $\FF_4$ defined by
\[\FF_{4,k}:=\{f_k(x): W_1W_2W_3 \ \mbox{is squarefree}\},\] so that $\FF_{3,k}$ is infinite. Arguing as in previous cases, if any two polynomials  $f_{k_1}(x),f_{k_2}(x)\in \FF_{4,k}$ with $k_1>k_2\ge 1$ are equivalent, calculations indicate that $k_2$ must be a root of the polynomial
\begin{multline*}
H(z)=32z^4+64z^3+(32k_2^2+32k_2+102)z^2\\
+(32k_2^2+32k_2+70)z+32k_2^4+64k_2^3+102k_2^2+70k_2+25.
\end{multline*} It is easy to verify that $H(z)$ has a unique minimum at $z=-1/2$ and that
$H(-1/2)\ge 523/2$. Hence, $H(z)$ has no real roots.

\subsection*{The proof concerning the Galois groups for $f(x)\in \FF_i$}\hfill\\ 
Since $f(x)$ is monogenic for any $f(x)\in \FF_i$ and for any $i\in \{1,2,3,4,5\}$, we have that $W_1$ and $W_2$ are squarefree. If $W_1W_2$ is a square, then $W_1=W_2$, which implies the contradiction that $A=0$. Hence, $W_1W_2$ is not a square. We claim that $W$ is not a square for each $i\in \{1,2,3\}$. We provide details only in the case $i=1$ since the methods are similar for $i\in \{2,3\}$. Since $W$ is not a square for $\FF_4$ by definition, we conclude then that $\Gal(f)\simeq D_4$ from Lemma \ref{Lem:AP} for $f(x)\in \FF_1\cup \FF_2\cup\FF_3\cup \FF_4$.

\subsection*{The proof that $W$ is not a square for $f(x)\in \FF_1$}\hfill\\
If $f(x)\in \FF_1$, then there are the two possibilities
\[(A\mmod{4}, \ B\mmod{4})\in \{(0,0),\ (2,0)\},\] which can be consolidated into the single situation $(A\mmod{2},\ B\mmod{4})=(0,0)$. Thus, we can write $A=2k$ and $B=4m$, for some nonzero integers $k$ and $m$. Assume, by way of contradiction, that $W$ is a square. Let \begin{gather*}
w_1:=W_1/2=2m+1-2k, \quad w_2:=W_2/2=2m+1+2k\\
 \quad \mbox{and}\quad w_3:=W_3/4=k^2-4m+2,
 \end{gather*} so that $w_1$, $w_2$ and $w_3$ are squarefree, and
$W/16=w_1w_2w_3$ is a square. Thus, either $w_1w_2>0$ and $w_3>0$ or $w_1w_2<0$ and $w_3<0$. If $w_1w_2<0$ and $w_3<0$, then
\[(2m+1)^2<4k^2<4(4m-2)=16m-8,\] which yields the contradiction that
\[(2m-3)^2=(2m+1)^2-16m+8<0.\] Hence, $w_1w_2>0$ and $w_3>0$, with $w_1$ and $w_2$ both positive or both negative. Let
\[P:=\gcd(w_1,w_3),\quad Q:=\gcd(w_1,w_2) \quad \mbox{and} \quad R:=\gcd(w_2,w_3).\] Then,
\begin{equation}\label{Eq:PQRsystem}
\abs{w_1}=PQ,\quad \abs{w_2}=QR \quad \mbox{and} \quad w_3=PR,
\end{equation} with $w_1w_2w_3=(PQR)^2$. 

We claim that $P\ne 1$, $Q\ne 1$ and $R\ne 1$. 
Suppose first, by way of contradiction, that $P=1$. Then, from \eqref{Eq:PQRsystem}, we have that $w_1w_3=w_2$. Solving this equation for $m$, we get
\begin{equation}\label{Eq:m}
m=(k^2+4k-1\pm \sqrt{k^4-8k^3+22k^2-56k+9})/8.
\end{equation} Hence, there must be an integral point on the quartic curve
\[y^2=k^4-8k^3+22k^2-56k+9.\] Using the Magma command
\[{\bf IntegralQuarticPoints}([1,-8,22,-56,9]),\] we get the integral points $(k,y)\in \{(-2,\pm 17),\ (0,\pm 3)\}$. We discard the points $(0,\pm 3)$ since $k$ cannot be 0. Thus, $m\in \{3/2,\ -11/4\}$ from \eqref{Eq:m}, contradicting the fact that $m\in \Z$. Hence, $P\ne 1$. The proof that $R\ne 1$ is similar, and we omit the details.
Suppose next, by way of contradiction, that $Q=1$. Then, from \eqref{Eq:PQRsystem}, we get the equation $w_1w_2=w_3$. Solving this equation for $k$ produces the solutions
\[k=\pm \sqrt{20m^2+40m-5}/5,\] which implies the contradiction that $k\not \in \Z$ since $20m^2+40m-5\equiv 3 \pmod{4}$. Thus, the claim is established. Furthermore, since $2\nmid PQR$ and each of $w_1$, $w_2$ and $w_3$ is squarefree, it follows that
\begin{equation}\label{Eq:P Q R}
P,\  Q, \  R \ \mbox{are odd, squarefree, pairwise-coprime and each is larger than 1}.
\end{equation}

 Suppose first that $w_1>0$ and $w_2>0$. 
 Using Maple to solve the system \eqref{Eq:PQRsystem}, we get that
  \begin{equation}\label{Eq:P}
  P^2Q^2-(2Q^2R+16Q+16R)P+(QR-8)^2=0.
  \end{equation} Rearranging \eqref{Eq:P}, we derive the two equivalent equations
  \begin{align*}
    (P^2-2PR+R^2)Q^2-(16P+16R)Q-(PR-4)&=0 \quad \mbox{and}\\
    Q^2R^2-((16+2Q^2)P+16Q)R+(PQ-8)^2&=0.
  \end{align*} We then deduce from these three arrangements, respectively, that
  \[P\mid (QR-8),\quad Q\mid (PR-4) \quad \mbox{and}\quad R\mid (PQ-8).\] Then
  \[PQR\mid (QR-8)(PR-4)(PQ-8),\]
  by \eqref{Eq:P Q R}. Let
  \[Z:=PQ+QR+2PR-8.\]
  Observe that $PQR\mid Z$, since
    \[Z=\dfrac{(QR-8)(PR-4)(PQ-8)-PQR(PQR-8R-8P-4Q)}{32}.\] Let $H:=Z-PQR$. Note that $H\ge 0$ since $PQR\mid Z$, and $Z>0$ from \eqref{Eq:P Q R}. Using the Maple command
    \[{\bf Maximize}(H,\{P\ge a,Q\ge b,R\ge c\}),\]
    with
    \[(a,b,c)\in \{(3,5,7),(3,7,5),(5,3,7),(5,7,3),(7,3,5),(7,5,3)\},\] Maple returns, respectively, the values:
    \[-21, \ -27, \ -7, \ -27, \ -7, \ -21,\] contradicting the fact that $H\ge 0$.
    
    Suppose next that $w_1<0$ and $w_2<0$. Solving the system \eqref{Eq:PQRsystem} using Maple, we get that 
    \begin{equation}\label{Eq:Sol}
    P^2Q^2+(16Q-2Q^2R-16R)P+(QR+8)^2=0.
    \end{equation} Then, solving \eqref{Eq:Sol} for $P$ gives
    \begin{equation}\label{Eq:P}
    P=\dfrac{-8Q+Q^2R+8R\pm 4\sqrt{R(Q^2+4)(R-2Q)}}{Q^2},
    \end{equation} which implies that $R(Q^2+4)(R-2Q)$ is a square. Hence, from \eqref{Eq:P Q R}, we deduce that 
    \begin{equation}\label{Eq:R>2Q}
    R>2Q \quad \mbox{and} \quad R\mid (Q^2+4).
    \end{equation} Similarly, solving \eqref{Eq:Sol} for $R$ gives
    \begin{equation}\label{Eq:P}
    R=\dfrac{PQ^2-8Q+8P\pm 4\sqrt{P(Q^2+4)(P-2Q)}}{Q^2},
    \end{equation} so that 
    \begin{equation}\label{Eq:P>2Q}
    P>2Q \quad \mbox{and} \quad P\mid (Q^2+4).
    \end{equation} Keeping in mind \eqref{Eq:P Q R}, we can combine \eqref{Eq:R>2Q} and \eqref{Eq:P>2Q} to get
    \[PR>4Q^2 \quad \mbox{and}\quad PR\mid (Q^2+4).\] Hence, we have that  
    \[4Q^2<PR\le Q^2+4,\] which implies that $Q^2\le 4/3$, and yields the contradiction that $Q=1$.      
    
    Thus, we conclude that $W$ is not a square. Consequently, from Lemma \ref{Lem:AP}, it follows that $\Gal(f)\simeq D_4$.

\subsection*{The proof for $\FF_5$}\hfill\\
Using methods similar to the previous subsection, it was shown in \cite[pp. 119--121]{JonesOcticsNJM} that the only possible pairs $(A,B)$, such that $W_1$, $W_2$ and $W_3$ are squarefree, and $W$ is a square are
\[(A,B)\in \{(\pm 1,1),(\pm 9, 19), (\pm 11, 31)\}.\] Then,
\[\FF_5=\{f_{A,B}(x): (A,B)\in \{(\pm 1,1),(\pm 9, 19), (\pm 11, 31)\},\] and it is straightforward to confirm that each $f_{A,B}(x)\in \FF_5$ is monogenic with $\Gal(f_{A,B})\simeq C_4$. Recall that $f_{A,B}(x)$ is equivalent to $f_{-A,B}(x)$ from the discussion in subsection \eqref{SS:MI22}. Since
 \[\Delta(f_{1,1})=5^3, \quad \Delta(f_{9,19})=3^213^3 \quad \mbox{and} \quad \Delta(f_{11,31})=5^311^2,\]
the monogenic polynomials $f_{1,1}(x)$, $f_{9,19}(x)$ and $f_{11,31}(x)$ are all distinct.
\end{proof}








\end{document}